\documentclass[10pt]{article}
\pdfoutput=1
\usepackage[T1]{fontenc}
\usepackage[utf8]{inputenc}
\usepackage[a4paper,margin=1in]{geometry}
\usepackage{amsmath,amssymb,amsthm,mathtools}
\usepackage{microtype}
\usepackage{authblk}
\usepackage[colorlinks=true,linkcolor=blue,citecolor=blue,urlcolor=blue]{hyperref}
\hypersetup{pdftitle={On a hypergraph Tur\'an problem of Balogh--Bohman--Bollob\'as--Zhao}}

\theoremstyle{plain}
\newtheorem{theorem}{Theorem}[section]
\newtheorem{lemma}[theorem]{Lemma}
\newtheorem{proposition}[theorem]{Proposition}
\newtheorem{corollary}[theorem]{Corollary}
\theoremstyle{definition}
\newtheorem{definition}[theorem]{Definition}
\theoremstyle{remark}

\newcommand{\Prb}{\mathbb{P}}

\newcommand{\oo}{o(1)}

\title{\Large \bf On a hypergraph Tur\'an problem of Balogh--Bohman--Bollob\'as--Zhao}

\date{\today}
\begin{document}

\author[1,2]{Heng~Li}
\author[3,4]{Jie~Ma}
\author[3]{Tianhen~Wang}
\author[5]{Yixiao~Zhang}
\author[3]{Tianming~Zhu}
\affil[1]{\small School of Mathematics, Shandong University, Jinan, China}
\affil[2]{\small Extremal Combinatorics and Probability Group, Institute for Basic Science, Daejeon, South
Korea}
\affil[3]{{\small School of Mathematical Sciences, University of Science and Technology of China, Hefei, China}}
\affil[4]{{\small Yau Mathematical Sciences Center, Tsinghua University, Beijing, China}}
\affil[5]{\small Center for Discrete Mathematics, Fuzhou University, Fujian, China}


\maketitle

\begingroup
\renewcommand\thefootnote{}
\footnotetext{Emails: \texttt{heng.li@sdu.edu.cn}; \texttt{jiema@ustc.edu.cn}; \texttt{wth1115060377@mail.ustc.edu.cn}; \texttt{fzuzyx@gmail.com}; \texttt{zhutianming@mail.ustc.edu.cn}.}
\endgroup

\begin{abstract}
Let $S$ and $T$ be disjoint sets with $|S|=i$ and $|T|=r-1$ for $2\le i\le r-1$, and let $B_i^{(r)}$ be the $r$-graph on $S\cup T$ whose edges are the $r$-subsets containing $S$ or $T$.
We study the deficit $q_{r,i}:=1-\pi(B_i^{(r)})$ in its Tur\'an density.
Balogh, Bohman, Bollob\'as, and Zhao previously obtained bounds for these deficits with logarithmic gaps near both ends of the sequence $B_i^{(r)}$, namely, when $i=O(1)$ or $i=r-O(1)$.
We close these gaps by showing that, as $r\to\infty$, for every fixed integer $a\ge1$, $q_{r,a+1}=\Theta_a(r^{-a})$, and for every fixed integer $b\ge2$, $q_{r,r-b}=\Theta_b(r^{-b}\log r)$.
\end{abstract}

\section{Introduction}

For a set $V$ and an integer $r\ge 1$, denote by $\binom{V}{r}$ the family of all $r$-element subsets of $V$.  
An \emph{$r$-uniform hypergraph} (or $r$-graph) $G$ consists of a vertex set $V(G)$ and an edge set $E(G)\subseteq\binom{V(G)}{r}$.  
We write $|G| = |E(G)|$ for the number of edges.
For an $r$-graph $H$, the \emph{Tur\'{a}n number} $\mathrm{ex}(n,H)$ is the maximum number of edges in an $H$-free $r$-graph on $n$ vertices.  
The \emph{Tur\'{a}n density} $\pi(H)$ is defined as  
\[
\pi(H)\coloneqq \lim_{n\to\infty} \mathrm{ex}(n,H) / \tbinom{n}{r},
\]
and the existence of this limit is guaranteed by a result of Katona, Nemetz, and Simonovits~\cite{KatonaNemetzSimonovits64}. 

In the setting of graphs (i.e., $r=2$), the asymptotic behavior of Tur\'{a}n numbers is well understood due to the celebrated Erd\H{o}s--Stone--Simonovits theorem, which determines $\pi(H)$ in terms of the chromatic number $\chi(H)$ of the graph $H$. To be precise, they proved that $\pi(H)=1-1/(\chi(H)-1)$ for any graph $H$ with $\chi(H)\ge 3$. For hypergraphs of uniformity $r\ge 3$, however, the situation is much more complex. No simple parameter analogous to the chromatic number is known to determine \(\pi(H)\). Indeed, even the Tur\'{a}n density of the complete $3$-graph $K_4^{(3)}$ (the tetrahedron) is not known exactly. The best known bounds are $5/9 \le \pi(K_4^{(3)}) \le 0.5616$, where the lower bound is due to Tur\'{a}n~\cite{Turan41} (see also~\cite{DeCaen91}), and the upper bound was established by Razborov~\cite{Razborov10} using his flag algebra method.

For general $r$, the problem of determining the Tur\'{a}n density of $K_{r+1}^{(r)}$, the complete $r$-graph on $r+1$ vertices, is even less understood. For $r=4$, Markstr\"{o}m~\cite{Markstrom} proved that $\pi(K_5^{(4)}) \le 1753/2380 \approx 0.73655$, while a construction of Giraud~\cite{Giraud} gave the lower bound $11/16$. Considerable attention has also been devoted to estimating \(\pi(K_{r+1}^{(r)})\) as \(r\to\infty\), a problem closely related to the study of the so-called {\it Tur\'an systems}, studied by Frankl and R\"odl~\cite{FR85}, Sidorenko~\cite{Sid97}, and others. Combining de Caen's classical upper bound~\cite{deCaen} with Pikhurko's recent construction~\cite{Pikhurko} for Tur\'an systems shows that, as \(r\to\infty\), the density \(\pi(K_{r+1}^{(r)})\) is
\(1-\Theta(1/r)\); see also the extension by Liu and Pikhurko~\cite{LiuPikhurko}. 

In this note, we study the Tur\'{a}n problem for a family of hypergraphs related to $K_{r+1}^{(r)}$ defined as follows. 
Given integers
$r,s,t$ with $2\le s \le r,~ 2\le t\le r-1$ and $s+t \ge r$, let $F^r_{s,t}$ be the $r$-graph with vertex set $S\cup T$, where
$S\cap T=\emptyset$, $|S|=s$ and $|T|=t$, and with edge set
\[
\left\{e\in\tbinom{S\cup T}{r}: S\subseteq e\text{ or }T\subseteq e \right\}.
\]
Thus $F^r_{2,r-1}=K^r_{r+1}$. 
In this paper, we focus on the deficit
\[
q_{r,i}:=1-\pi(F^r_{i,r-1}) \qquad \mbox{for} \qquad 2\leq i\leq r-1.
\]
Throughout the paper, subscripts on asymptotic notation indicate the parameters on which the implicit constants may depend; asymptotic notation without subscripts has absolute implicit constants.

The study of these hypergraphs was motivated in part by a problem raised by Mubayi (see Problem~25 in the survey~\cite{MubayiProblem}), who asked to determine the growth rate of $1-\pi(F^r_{i,r-1})$ as $r\to\infty$ with $i=i(r)$, and in particular whether
\[
q_{r,r-1}= 1-\pi(F^r_{r-1,r-1}) =\Theta\left(\frac{\log r}{r}\right).
\]

Writing $F^r_{i,r-1}$ as $B_i^{(r)}$, Balogh, Bohman, Bollob\'as, and Zhao~\cite{BBBZ} proved the following theorem. 
\begin{theorem}[\cite{BBBZ}]\label{THM:BBBZ}
Fix \(a,b\ge1\). As \(r\to\infty\), we have
    \[
    \Theta_a(r^{-a})\le q_{r,a+1}\le \Theta_a(r^{-a}\log r)
    \quad\mbox{and}\quad
    \Theta_b(r^{-b})\le q_{r,r-b}\le \Theta_b(r^{-b}\log r).
    \]
\end{theorem}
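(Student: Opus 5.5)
The statement is a two-sided estimate on the deficit $q_{r,i}=1-\pi(B_i^{(r)})$, and the two sides need different kinds of argument. The lower bounds $q_{r,a+1}\ge c_a r^{-a}$ and $q_{r,r-b}\ge c_b r^{-b}$ are \emph{upper} bounds on $\pi$. For these I will show that every sufficiently large $r$-graph with edge density above $1-c r^{-a}$ (respectively $1-c r^{-b}$) contains a copy of $B_i^{(r)}$. The upper bounds $q\le C r^{-a}\log r$ and $q\le C r^{-b}\log r$ are \emph{lower} bounds on $\pi$. For these I will construct $B_i^{(r)}$-free $r$-graphs whose missing density is only $O(r^{-a}\log r)$, respectively $O(r^{-b}\log r)$.

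\emph{Lower bounds on $q$ (a random embedding and a union bound).} Let $G$ be an $r$-graph on $n$ vertices with $|G|\ge(1-\varepsilon)\binom nr$, where $n$ is large. Choose uniformly at random an injection of the $i+r-1$ vertices of $S\cup T$ into $V(G)$. Each edge of $B_i^{(r)}$ then maps to a uniformly random $r$-subset of $V(G)$. Such an $r$-subset is a non-edge of $G$ with probability at most $\varepsilon$.
\begin{itemize}
\item For $i=a+1$, the edges of $B_i^{(r)}$ containing $S$ are $S\cup U$ with $U\in\binom{T}{r-a-1}$; there are $\binom{r-1}{a}\le r^a/a!$ of them. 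The edges containing $T$ are the $a+1$ sets $T\cup\{v\}$ with $v\in S$.
\item For $i=r-b$, the edges containing $S$ are indexed by $b$-subsets of $T$, giving $\binom{r-1}{b}\le r^b/b!$ edges. The edges containing $T$ number $r-b$.
\end{itemize}
In both cases, whenever
\[
\varepsilon\,|B_i^{(r)}| < 1,
\]
the union bound shows that with positive probability all edges of the copy are present. This gives $\pi(B_{a+1}^{(r)})\le 1-c_a r^{-a}$ and $\pi(B_{r-b}^{(r)})\le 1-c_b r^{-b}$, with, say, $c_a=a!/2$ and $c_b=b!/2$ for large $r$.

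\emph{Upper bounds on $q$ (constructions).} I will colour the vertices uniformly at random from $\mathbb Z_m$, with $m\approx r^a/(K\log r)$ for $i=a+1$ and $m\approx r^b/(K\log r)$ for $i=r-b$. An $r$-set $e$ is declared a non-edge iff its colour sum $\sigma(e)$ satisfies $\sigma(e)\equiv0\pmod m$, possibly after adding a second rule that removes $e$ when its colour multiset is very degenerate. The main deletion rule removes a $1/m=O(r^{-a}\log r)$ fraction of $r$-sets. The task is to make the extra rule cost at most the same order.

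\emph{Why this should kill every copy, for $i=a+1$.} Suppose a copy survives. Writing each $S\cup U$ as $S\cup(T\setminus W)$ with $W\in\binom Ta$, the surviving edges force:
\begin{itemize}
\item $\sigma(T)+c(v)\not\equiv0$ for every $v\in S$, from the edges $T\cup\{v\}$;
\item $\sigma(S)+\sigma(T)-\sigma(W)\not\equiv0$ for every $W\in\binom Ta$, from the edges $S\cup U$.
\end{itemize}
So the value $\sigma(S)+\sigma(T)$ must avoid the whole set $\Sigma_a(T)$ of $a$-subset sums of $T$. Once $m$ is a factor $K\log r$ below $\binom{r-1}a$, a set $T$ with "generic" colours has $\Sigma_a(T)=\mathbb Z_m$, which is a contradiction. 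The case $i=r-b$ is the same with $b$-subset sums in place of $a$-subset sums.

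\emph{Main obstacle.} The main obstacle is the degenerate sets $T$, for instance those with few distinct colours, whose $a$-subset sums do not cover $\mathbb Z_m$. The $\log r$ loss comes from exactly here, consistent with the statement only claiming $r^{-a}\log r$. My first attempt will be to show, via a Chernoff-type estimate over random colourings, that the $r$-sets containing such a degenerate $(r-1)$-set form an $O(r^{-a}\log r)$ fraction, and to delete them all. If that fails, I would instead take a Frankl--R\"odl/Sidorenko-style blow-up: a weighted partition of the vertices, where non-edges are those $r$-sets whose part-profile lies in a small family chosen so that every pair $(S,T)$ meets a forbidden profile. In that approach the $\log r$ factor would come from a greedy covering or set-cover argument.
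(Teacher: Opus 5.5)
This statement is imported from \cite{BBBZ} without proof, so your proposal has to stand on its own. Your lower bounds are complete and correct: a random injection plus a union bound gives $q_{r,i}\ge 1/|B_i^{(r)}|$, with $|B_{a+1}^{(r)}|=\binom{r-1}{a}+a+1$ and $|B_{r-b}^{(r)}|=\binom{r-1}{b}+r-b$. The combinatorial half of your construction is also right. Delete every $r$-set with $\sigma(e)\equiv 0$, and every $r$-set containing a degenerate $(r-1)$-set $T$, meaning one with $\Sigma_a(T)\ne\mathbb{Z}_m$ (resp.\ $\Sigma_b(T)\ne\mathbb{Z}_m$). Then each copy on $(S,T)$ loses either all edges $T\cup\{v\}$, or a zero-sum edge $S\cup(T\setminus W)$ (resp.\ $S\cup A$).

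\textbf{The gap.} The density estimate carries the entire content of the upper bounds, and you only announce it as a ``first attempt'' with an unspecified fallback. You need $\Pr[\Sigma_a(T)\ne\mathbb{Z}_m]=O(r^{-a-1}\log r)$ for $r-1$ i.i.d.\ uniform colours; the extra factor $r^{-1}$ is needed because an $r$-set has $r$ subsets of size $r-1$. A Chernoff bound does not give this for $a\ge 2$, since the indicators $\mathbf{1}[\sigma(W)=z]$, $W\in\binom{T}{a}$, are strongly dependent. (For $a=1$ it is just coupon collector.)

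\textbf{The missing idea is decoupling.} Take $m=\binom{r-1}{a}/(K\log r)$, split $T$ into blocks $B_1,\dots,B_a$ of size $\ell=\lfloor (r-1)/a\rfloor$, and use only transversal $a$-sets. Let $Q$ be the set of sums of transversals of $B_1,\dots,B_{a-1}$. Two distinct such transversals have equal sums with probability exactly $1/m$. So the expected number of colliding pairs is at most $\ell^{2a-2}/m=o(\ell^{a-1})$, because $m\gg r^{a-1}$. Each colour enters only $\ell^{a-2}$ of these sums, so McDiarmid's inequality gives $|Q|\ge \ell^{a-1}/2$ except with probability $e^{-\Omega_a(r)}$. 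Conditional on the first $a-1$ blocks, a residue $z$ is missed only if all $\ell$ independent colours of $B_a$ avoid $z-Q$. This has probability at most $\exp(-|Q|\ell/m)\le r^{-c_aK}$ for some $c_a>0$. A union bound over the $m\le r^a$ residues gives failure probability at most $r^{-a-2}$ once $K=K_a$ is large. Hence the deleted density is $1/m+O(r^{-a-1})=O_a(r^{-a}\log r)$. The same argument with $b$ in place of $a$, using $-\sigma(S)\in\Sigma_b(T)$, gives $q_{r,r-b}=O_b(r^{-b}\log r)$.

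\textbf{Comparison with the paper.} Your colour-sum construction inherently pays the coupon-collector factor $\log r$. By Theorem~\ref{thm:near-endpoint}, that factor is genuinely needed at the right end for $b\ge 2$, so your construction is sharp in order there. At the left end, Section~\ref{sec:bound-fixed-i} removes the factor. It uses only the edges through $S$, via $(a+1,a)$-shadow-complete systems, together with a Pikhurko-style recursion on the vertices above the $k$ smallest ones.
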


Thus their theorem leaves
logarithmic gaps near both ends of the sequence $B_2^{(r)},\ldots,B_{r-1}^{(r)}$.  
We refer readers to  the survey of Keevash~\cite{Keevash} for an explanation of why logarithmic deficits are natural near the right end of the sequence.
To put our results in perspective, we briefly survey earlier results related to $q_{r,i}$. First, the case $i=2$ is classical. Since $F^r_{2,r-1}=K^r_{r+1}$, the deficit $q_{r,2}$, as mentioned before, is known to be \(\Theta(1/r)\).  
For general fixed $i$, Balogh, Bohman, Bollob\'as, and Zhao~\cite{BBBZ} also introduced certain modifications of $B_i^{(r)}$ and obtained matching logarithmic estimates for those modified hypergraphs. 
It should be noted that the independent neighborhood problem studied by Bohman, Frieze, Mubayi, and Pikhurko~\cite{BFMP}, corresponding to $\pi(F^r_{r,r-1})$, is distinct from the problem of determining $q_{r,i}$ considered here.

The purpose of
this note is to close the logarithmic gaps of Theorem~\ref{THM:BBBZ} for fixed $i=a+1\ge 2$ at the left end, and for fixed $b=r-i\ge 2$ at the right end (of the sequence $B_i^{(r)}$ for $2\leq i\leq r-1$). Our main results are as follows.

\begin{theorem}\label{thm:small-i}
For every fixed integer $a\ge 1$, as $r\to \infty$ we have 
\[
q_{r,a+1}=O_a(r^{-a}).
\]
\end{theorem}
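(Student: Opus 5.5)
The plan is to prove the bound by an explicit construction. Since $q_{r,a+1}=1-\pi(B_{a+1}^{(r)})$, it suffices to exhibit, for every large $n$, a $B_{a+1}^{(r)}$-free $r$-graph on $n$ vertices missing at most $O_a(r^{-a})\binom nr$ edges. Equivalently, I want a \emph{blocking family} $\mathcal M\subseteq\binom{[n]}{r}$ of density $O_a(r^{-a})$ such that every copy of $B_{a+1}^{(r)}$ uses an edge of $\mathcal M$. Recall the structure of a copy: $S=\{s_0,\dots,s_a\}$, $T$ with $|T|=r-1$, and edges $T\cup\{s_j\}$ ($a+1$ of them) together with $S\cup X$ for $X\in\binom{T}{r-a-1}$. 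So $\mathcal M$ must either contain some $T\cup\{s_j\}$, or contain some $S\cup X$.

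The construction I would try is a Frankl--R\"odl/Sidorenko type partition construction, as used for $K_{r+1}^{(r)}$ (the case $a=1$, where the deficit $\Theta(1/r)$ is known), enriched to gain a factor $r^{-1}$ per extra vertex of $S$.
\begin{itemize}
\item Partition $[n]$ into $m=\lceil r/C\rceil$ nearly equal parts $V_1,\dots,V_m$, and assign each vertex a colour in $\mathbb Z_p$, with $p=p(a)$ a fixed prime.
\item For an $r$-set $e$, record its part-count vector $(|e\cap V_k|)_k$ and colour sum $\sigma(e)$.
\item Put $e\in\mathcal M$ iff a condition of codimension ``$a$'' holds. Concretely, the count vector has $a$ prescribed degeneracies, e.g.\ $e$ meets some fixed set of parts in a prescribed parity pattern, and $\sigma(e)\equiv 0$.
\end{itemize}
The condition should be designed so that a random $r$-set satisfies it with probability $\Theta_a(m^{-a})=\Theta_a(r^{-a})$. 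This gives the density bound, via a local limit estimate for the multinomial distribution of the count vector.

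For freeness, I fix a copy $(S,T)$ and study how the statistics of $T\cup\{s_j\}$ depend on $j$. Each of the $a+1$ edges differs from the fixed $(r-1)$-set $T$ in one vertex, so its statistic is $\mathrm{stat}(T)+\delta(s_j)$. If none of these edges lies in $\mathcal M$, then the $a+1$ vertices of $S$ must avoid the ``bad'' shifts. Pigeonholing over $a+1$ vertices against $a$ constraints should then force $S$ itself to carry a specific pattern. Given that pattern, I must find some $X\in\binom{T}{r-a-1}$ with $S\cup X\in\mathcal M$. Here the freedom of choosing $X$, that is, deleting $a$ vertices of $T$, should let me adjust $a$ coordinates of the statistic, which matches the codimension $a$ of the condition.

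I expect the main obstacle to be getting the freeness argument to close exactly. One needs a condition that is simultaneously of density only $r^{-a}$ and rich enough that deleting $a$ vertices from $T$ can always realise it. If the single-coordinate design fails, my first fallback is a product-style argument by induction on $a$. I would combine the known $K_{r+1}^{(r)}$-type blocking family (the $a=1$ case, matching Theorem~\ref{THM:BBBZ}'s lower bound) with an independent partition, and put $e\in\mathcal M$ only if $e$ is blocked in both, so that each additional vertex of $S$ buys one more factor of $r^{-1}$. The inductive step would handle $s_a$ via the link of $T$.
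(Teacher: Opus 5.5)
\textbf{There is a genuine gap.} Your proposal does not yet contain the argument that carries the theorem. The blocking condition is never pinned down, and the check that every copy of $B^{(r)}_{a+1}$ meets $\mathcal M$ is left at ``should force'' and ``should let me adjust''.

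There is also a concrete obstruction your sketch does not address. With $m=\lceil r/C\rceil$ parts and $C$ constant, a typical $(r-1)$-set $T$ misses roughly an $e^{-C}$ fraction of the parts. Passing from $T$ to $X=T\setminus Y$ with $|Y|=a$ can only lower counts in parts that $T$ actually meets. For degenerate $T$ (say all of $T$ in one part with one colour), all $\binom{r-1}{a}$ edges $S\cup X$ have the \emph{same} statistic, so there is no freedom at all. Blocking must hold for every pair $(S,T)$. So a constant fraction of configurations, and all degenerate ones, need extra blocking edges. A one-level patch, such as adding all $r$-sets that miss some part, has density roughly $me^{-r/m}$ and is negligible only when $m=O(r/\log r)$, which brings the logarithm back.

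This is exactly where partition constructions for $K^{(r)}_{r+1}$ lose a factor $\log r$ (compare the $a=1$ case of Theorem~\ref{THM:BBBZ}). The $O(1/r)$ bound you invoke for $a=1$ comes from Pikhurko's recursive construction, not from a partition design. Your fallback has a separate flaw. If $\mathcal M_1$ and $\mathcal M_2$ are blocking families, $\mathcal M_1\cap\mathcal M_2$ need not be, because a given copy may be blocked by different edges in the two families. The densities multiply, but the blocking property is lost.

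\textbf{The missing idea} is a recursive fix-up whose cost is weighted by a small failure probability. The paper ignores the edges $T\cup\{s_j\}$ and blocks only with edges $S\cup X$. By Proposition~\ref{prop:shadow-implies}, it suffices to build an $(a+1,a)$-shadow-complete $r$-graph: every $(r+a)$-set $U$ and every $(a+1)$-set $Z\subseteq U$ admit an edge $e$ with $Z\subseteq e\subseteq U$. The construction runs as follows.
\begin{enumerate}
\item Order $[n]$ and take a random $(k-a)$-graph $\mathcal S$ with edge probability $\Theta_a(\binom{k}{a}^{-1})$.
\item Put into $G_1$ every $r$-set whose $k-a$ smallest vertices form an edge of $\mathcal S$. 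This serves every $U$ whose $k$ smallest vertices form a good $k$-set $X$, meaning every $(a+1)$-subset of $X$ lies in some $Q\in\mathcal S[X]$; one takes $e=Q\cup(U\setminus X)$.
\item By Lemma~\ref{lem:random-seed}, a fixed $X$ is bad with probability at most a small constant $p_0$. For each bad $X$, add $X\cup f$ for all $f$ in a recursively built shadow-complete $(r-k)$-graph on the vertices after $\max X$.
\end{enumerate}
Averaging over $\max X$ gives $\theta(r,a+1,a)\le O_a(\binom{k}{a}^{-1})+p_0\,\theta(r-k,a+1,a)$. With $k=r-\lfloor r/(a+1)\rfloor$ and $\Lambda_a(r)=\binom{r-1}{a}\theta(r,a+1,a)$, this becomes $\Lambda_a(r)\le A_a+\frac12\Lambda_a(\lfloor r/(a+1)\rfloor)$, so $\Lambda_a(r)=O_a(1)$. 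The bad configurations are paid for at relative weight $p_0$ on a smaller instance rather than globally, and that is what removes the logarithm.
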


\begin{theorem}\label{thm:near-endpoint}
For every fixed integer $b\ge2$, set $C = C_{r,b} := \binom{r-1}{b}$. As $r\to \infty$, we have 
\[
q_{r,r-b} \ge (1-\oo)\,\frac{W\left(C/(r-b)\right)}{C},
\]
where \(W\) denotes the principal branch of the Lambert \(W\)-function, that is, the inverse of \(f(x)=xe^x\) on \([0,\infty)\).
\end{theorem}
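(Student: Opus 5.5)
The plan is a double-counting argument between ``potential copies'' of $B_{r-b}^{(r)}$ and the non-edges that block them, combined with a lower bound on clique counts in link hypergraphs. Write $i=r-b$. Let $G$ be a $B_i^{(r)}$-free $r$-graph on $n$ vertices with edge density $1-q$. It suffices to show $q\ge(1-\oo)\,x_0/C$, where $x_0=W(C/i)$ is the solution of $x e^{x}=C/i$.

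In $B_i^{(r)}$ with parts $S$ ($|S|=i$) and $T$ ($|T|=r-1$), the edges are of two kinds:
\begin{itemize}
\item the $i$ sets $T\cup\{s\}$ with $s\in S$;
\item the $C=\binom{r-1}{b}$ sets $S\cup B$ with $B\in\binom{T}{b}$.
\end{itemize}
For an $i$-set $S$, let $H_S$ be its link, a $b$-graph on $V(G)\setminus S$. Then $G$ being $B_i^{(r)}$-free says exactly this: whenever $T$ is an $(r-1)$-clique of $H_S$, some $s\in S$ has $T\cup\{s\}\notin E(G)$.

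Pick a uniformly random pair $(S,T)$ of disjoint sets with $|S|=i$ and $|T|=r-1$, and let $p$ be the probability that $T$ is a clique in $H_S$. For each such pair, fix a blocking vertex $s$. The resulting map $(S,T)\mapsto T\cup\{s\}$ lands on a non-edge. Given the pair $(T,s)$, the remaining part $S\setminus\{s\}$ is then a uniformly random $(i-1)$-set, so by the union bound over $s\in S$,
\[
p\le \Prb\bigl(\exists s\in S:\ T\cup\{s\}\notin E(G)\bigr)\le i\,q .
\]
This is the easy half.

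The other half is a lower bound $p\ge (1-\oo)\,e^{-(1+\oo)qC}$. We are in the regime where this matters: by Theorem~\ref{THM:BBBZ}, $qC=O_b(\log r)$, so $q\ll r^{-b+o(1)}$. Each of the $C$ sets $S\cup B$ is a non-edge with probability exactly $q$, so the heuristic value of $p$ is $(1-q)^C\approx e^{-qC}$. Combining the two halves gives $i q\ge (1-\oo)e^{-(1+\oo)qC}$. Setting $x=qC$, this reads $x e^{(1+\oo)x}\ge(1-\oo)C/i$. Since $W$ is increasing and $W(y)\sim\log y$ is slowly varying, we get $x\ge(1-\oo)W(C/i)$, which is the claim.

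The main obstacle is justifying $p\gtrsim e^{-qC}$, because clique density is not bounded below by $(1-q)^{\binom{m}{b}}$ in general; Turán-type constructions defeat it. What saves us is that $q$ is polynomially small in $r$ while $m=r-1$. I would reveal $T$ one vertex at a time, $v_1,\dots,v_{r-1}$. At step $j$, the new vertex must complete $\binom{j-1}{b-1}$ new $b$-sets inside $H_S$, where $S$ is also random. Define $f_j$ as the density of $j$-cliques (averaged over $S$) and aim for the recursion
\[
f_{j+1}\ge f_j\Bigl(1-\tbinom{j}{b-1}q_j\Bigr),
\]
where $q_j$ is the non-edge density seen from the link of a typical $j$-clique. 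The difficulty is that $q_j$ could exceed $q$ if non-edges concentrate on links of cliques. To control this, I would use convexity: Jensen applied to $\log$ of the number of extensions, averaged over the random ordering and over $S$. This should show that $\sum_j\binom{j}{b-1}q_j$ is at most $(1+\oo)qC$ plus an error term. The error is controlled because any concentration of non-edges would itself reduce $p$ by at most a factor $1-O(\sqrt{q}\,\mathrm{poly}(r))=1-\oo$. If this fails, the fallback is to symmetrize $G$ first: pass to a blow-up or random-weighted version in which link densities are uniform, so that Jensen applies directly and gives $p\ge\exp\bigl(-\sum_{B}\Prb(S\cup B\notin E)(1+O(q))\bigr)=e^{-(1+\oo)qC}$.
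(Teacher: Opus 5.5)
Your outer framework is the paper's: the union bound $p\le iq$ is exactly its witness count, and the closing Lambert-$W$ step is the same. The gap is the other half, $p\ge e^{-(1+\oo)qC}$. This is the real content of the theorem, and your proposal only asserts it.

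\emph{Why the sketch does not prove it.} The recursion $f_{j+1}\ge f_j\bigl(1-\binom{j}{b-1}q_j\bigr)$ holds essentially by the definition of $q_j$, as a union bound over the $(b-1)$-subsets of the current clique. All the work lies in proving $\sum_j\binom{j}{b-1}q_j\le(1+\oo)qC$. That is, you must show that non-edges of the links do not concentrate on $(b-1)$-sets lying in many cliques. Your three supporting moves do not establish this:
\begin{itemize}
\item Jensen/convexity by itself says nothing about this correlation, and you only say it ``should show'' the bound.
\item The claimed loss factor $1-O(\sqrt q\,\mathrm{poly}(r))$ is not derived. For $b=2$, where $\sqrt q\approx r^{-1}\sqrt{\log r}$, it is not even $\oo$ for a general polynomial.
\item The symmetrization fallback gives no mechanism that equalizes the non-edge codegrees inside each link while keeping $G$ $B_i^{(r)}$-free with the same density. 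Codegree uniformity inside a link is what would make the product bound trivial. Equalizing only the link densities $\alpha_S$ does not help, because the Tur\'an-type obstruction you yourself noted lives inside a single link. The bound $p\ge\exp\bigl(-\sum_B\Prb(S\cup B\notin E)(1+O(q))\bigr)$ is a Harris-type inequality with no independence behind it here.
\end{itemize}

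\emph{The missing ingredient} is de Caen's clique-counting inequality (Corollary 10 of \cite{BBBZ}; the paper's Lemma~\ref{lem:decaen}). A $b$-graph on $N$ vertices with missing-edge density $\alpha$ has $t$-clique density at least $\prod_{j=b}^{t}\bigl(1-\binom{j-1}{b-1}\widehat\alpha\bigr)$. Its proof is a Moon--Moser argument, and it supplies exactly your bound on $q_j$:
\begin{itemize}
\item If $v$ does not extend a $j$-clique $J$, then at most $b-1$ of the sets $(J\setminus\{u\})\cup\{v\}$, $u\in J$, are cliques.
\item Summing over $J$ and applying Cauchy--Schwarz to the extension counts of $(j-1)$-cliques shows the step-$j$ failure probability is at most $j/(j-b+1)=\binom{j}{b-1}/\binom{j-1}{b-1}$ times the step-$(j-1)$ one, up to $O(1/N)$ terms.
\item Starting from $\alpha$ at step $b-1$, this gives failure probability at most $\binom{j}{b-1}\alpha$ at step $j$.
\end{itemize}

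\emph{How the paper uses it.} It applies the lemma link by link:
\begin{itemize}
\item By Markov, only a fraction $\mu\le q\binom{r-2}{b-1}/\eta=\oo$ of the sets $S$ have $\alpha_S\binom{r-2}{b-1}>\eta$, and $\mu qC=\oo$ because $qC=O_b(\log r)$.
\item On the remaining $S$ the product is $\exp(-(1+\oo)\alpha_S C)$ uniformly.
\item Jensen for the convex map $x\mapsto e^{-x}$ over these $S$ gives $p\ge\exp(-(1+\oo)qC)$.
\end{itemize}
Your idea of averaging over $S$ first is compatible with this. If you sum the per-link Moon--Moser inequalities over $S$ before applying Cauchy--Schwarz over the pairs $(S,I)$, you get the product bound for $p$ directly with $\alpha=q$. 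With this lemma in place, the rest of your argument goes through.
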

In particular, for every fixed \(b\ge2\), as \(r\to\infty\),
\[
q_{r,r-b}\ge (1-\oo)\frac{(b-1)\log r}{\tbinom{r-1}{b}} = \Omega_b(r^{-b} \log r).
\]
Together, Theorem~\ref{thm:small-i}, Theorem~\ref{thm:near-endpoint}, and Theorem~\ref{THM:BBBZ} imply the following corollary.

\begin{corollary}
    For all fixed integers $a\ge 1$ and $b \ge 2$, as $r\to \infty$ we have 
    \[
    q_{r,a+1}=\Theta_a(r^{-a}) \quad \text{and} \quad q_{r,r-b}=\Theta_b(r^{-b}\log r).
    \]
\end{corollary}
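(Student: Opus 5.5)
The statement to prove is the Corollary. It follows by combining the two new theorems with Theorem~\ref{THM:BBBZ}. No new combinatorics is needed, only careful matching of the bounds and some asymptotics of the Lambert $W$-function.

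\emph{The left end.} Fix $a\ge1$. Theorem~\ref{THM:BBBZ} gives the lower bound $q_{r,a+1}\ge\Omega_a(r^{-a})$, and Theorem~\ref{thm:small-i} gives the upper bound $q_{r,a+1}=O_a(r^{-a})$. Together these give $q_{r,a+1}=\Theta_a(r^{-a})$.

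\emph{The right end.} Fix $b\ge2$. The upper bound $q_{r,r-b}=O_b(r^{-b}\log r)$ is the second half of Theorem~\ref{THM:BBBZ}. For the lower bound we use Theorem~\ref{thm:near-endpoint}, and we must show that $W(C/(r-b))/C=\Omega_b(r^{-b}\log r)$, where $C=\binom{r-1}{b}$. This is the only step with content, and it is elementary.

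\begin{itemize}
\item \emph{Size of the argument.} Since $C=\Theta_b(r^b)$, the argument satisfies $x:=C/(r-b)=\Theta_b(r^{b-1})$. Because $b\ge2$, this tends to infinity.
\item \emph{Behaviour of $W$.} From $W(x)e^{W(x)}=x$ we get $W(x)=\log x-\log W(x)$. For $x\ge e$ we have $W(x)\ge1$ and $W(x)\le\log x$, so $\log x - \log\log x\le W(x)\le \log x$. Hence $W(x)=(1-o(1))\log x$.
\item \emph{Conclusion.} Here $\log x=(b-1)\log r+O_b(1)$, so
\[
\frac{W(C/(r-b))}{C}\ge(1-o(1))\frac{(b-1)\log r}{\binom{r-1}{b}}.
\]
Since $b-1\ge1$ and $\binom{r-1}{b}\le r^b/b!$, this is $\Omega_b(r^{-b}\log r)$.
\end{itemize}

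The hypothesis $b\ge2$ is used exactly here: it makes $x\to\infty$ and the factor $b-1$ positive. For $b=1$ the argument $x$ stays bounded and no logarithm appears, which is why that case is excluded.

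Combining the two bounds gives $q_{r,r-b}=\Theta_b(r^{-b}\log r)$. The main obstacle is just the $W$ asymptotics above, which present no real difficulty once $W(x)\sim\log x$ is established.
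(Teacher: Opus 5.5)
Your proposal is correct and follows the paper's route exactly. At the left end it pairs Theorem~\ref{thm:small-i} with the lower bound of Theorem~\ref{THM:BBBZ}, and at the right end it pairs Theorem~\ref{thm:near-endpoint} with the upper bound of Theorem~\ref{THM:BBBZ}, where your derivation of $\log x-\log\log x\le W(x)\le\log x$ for $x\ge e$ simply makes explicit the expansion $W(C/(r-b))=(b-1)\log r+O_b(\log\log r)$ that the paper quotes.
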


\noindent This solves two instances of Problem~25 in the survey~\cite{MubayiProblem} due to Mubayi.
Note that Mubayi's problem of determining $q_{r,r-1}$ remains open up to a logarithmic factor. 

The rest of this note is organized as follows.  
In Section~\ref{sec:preliminary}, we introduce notation and the concept of shadow-complete systems.  
In Section~\ref{sec:bound-fixed-i}, we prove Theorem~\ref{thm:small-i} by relating $F_{s,t}^{r}$-free $r$-graphs to shadow-complete systems.  
In Section~\ref{sec:near-endpoint}, we prove Theorem~\ref{thm:near-endpoint} by applying a clique-counting method to obtain the required lower bound on $q_{r,r-b}$ for fixed $b\ge 2$.

\section{Preliminaries}\label{sec:preliminary}

We use the following notation.
For an integer $n\ge 1$, write $[n]=\{1,2,\dots,n\}$. The \emph{complement} \(\overline G\) of an \(r\)-graph \(G\) has vertex set \(V(G)\) and edge set
\(
E(\overline G)=\tbinom{V(G)}{r}\setminus E(G).
\)
For an $r$-graph $G$ and a set $U\subseteq V(G)$, the \emph{induced subgraph} $G[U]$ has vertex set $U$ and edges $\{e\in E(G): e\subseteq U\}$.  
For $0\le s\le r$, the \emph{$s$-shadow} of $G$ is
\[
\partial_s(G) = \left\{ S\in\tbinom{V(G)}{s} : \text{there exists } e\in E(G) \text{ such that } S\subseteq e \right\}.
\]
For an $(r-1)$-set $S$ in an $r$-graph $G$, its \emph{neighborhood} is 
\(
N_G(S)=\left\{x\notin S : S\cup\left\{x\right\}\in G\right\},
\)
and its \emph{degree} is denoted by $d_G(S)=|N_G(S)|.$
For an $(r-b)$-set $S$ in an $r$-graph $G$, the \emph{$b$-uniform link} of $S$ is
\[
L_G(S) := \left\{ A\in\tbinom{V(G)\setminus S}{b} : S\cup A \in G \right\}.
\]

We need the following Chernoff-type bound. For completeness, we present a self-contained proof here.  

\begin{lemma}\label{LEM:Chernoff}
    Let \(X_1, X_2, \dots, X_n\) be independent random variables with \(0 \le X_i \le 1\). Define \(S = \sum_{i=1}^n X_i\) and \(\mu = \mathbb{E}[S]\). Then for any \(0 < \delta < 1\),
    \[
    \Prb\bigl[S \le (1-\delta)\mu\bigr] \le \exp\left(-\mu(1-\delta)\log(1-\delta)-\mu\delta\right).
    \]
\end{lemma}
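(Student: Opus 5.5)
The plan is the standard exponential-moment (Chernoff) argument applied to the lower tail. Fix a parameter $t>0$ and apply Markov's inequality to the nonnegative random variable $e^{-tS}$:
\[
\Prb\bigl[S\le(1-\delta)\mu\bigr]=\Prb\bigl[e^{-tS}\ge e^{-t(1-\delta)\mu}\bigr]\le e^{t(1-\delta)\mu}\,\mathbb{E}\bigl[e^{-tS}\bigr].
\]
Since the $X_i$ are independent, the expectation factors as $\mathbb{E}[e^{-tS}]=\prod_{i=1}^n\mathbb{E}[e^{-tX_i}]$.

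Next, each factor is bounded using convexity. Because $x\mapsto e^{-tx}$ is convex on $[0,1]$, we have $e^{-tx}\le 1-x+xe^{-t}$ for $x\in[0,1]$. Writing $\mu_i=\mathbb{E}[X_i]$, this gives
\[
\mathbb{E}[e^{-tX_i}]\le 1-\mu_i(1-e^{-t})\le \exp\bigl(-\mu_i(1-e^{-t})\bigr),
\]
where the last step uses $1+y\le e^y$. Multiplying over $i$ and using $\sum_i\mu_i=\mu$ yields
\[
\Prb\bigl[S\le(1-\delta)\mu\bigr]\le \exp\bigl(t(1-\delta)\mu-\mu(1-e^{-t})\bigr).
\]

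Finally, we optimize over $t$. Setting the derivative of the exponent to zero gives $e^{-t}=1-\delta$, that is, $t=-\log(1-\delta)>0$, which is valid since $0<\delta<1$. Substituting, the exponent becomes $-\mu(1-\delta)\log(1-\delta)-\mu\delta$, exactly the claimed bound.

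None of these steps is a real obstacle. The only point requiring care is the convexity bound for variables that are merely bounded rather than Bernoulli, and that is handled by the chord inequality above.
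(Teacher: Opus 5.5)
Your proposal is correct and follows the paper's proof essentially step for step. Both apply Markov to $e^{-tS}$, use the convexity (chord) bound together with $1+y\le e^y$ on each factor, and take $t=-\log(1-\delta)$; your derivative computation only explains why that choice of $t$ is the natural one.
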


\begin{proof}
For any \(t>0\), Markov's inequality yields
\[
\Prb\bigl[S \le (1-\delta)\mu\bigr] = \Prb\bigl[e^{-tS} \ge e^{-t(1-\delta)\mu}\bigr]
\le e^{t(1-\delta)\mu}\,\mathbb{E}\bigl[e^{-tS}\bigr].
\]
By independence and convexity of \(x\mapsto e^{-tx}\) on \([0,1]\),
\[
\mathbb{E}\bigl[e^{-tX_i}\bigr] \le 1-\mu_i + \mu_i e^{-t} \le \exp\bigl(\mu_i(e^{-t}-1)\bigr),
\]
where \(\mu_i = \mathbb{E}[X_i]\). Hence \(\mathbb{E}[e^{-tS}] \le \exp\bigl(\mu(e^{-t}-1)\bigr)\).
Thus
\[
\Prb\bigl[S \le (1-\delta)\mu\bigr] \le \exp\Bigl(\mu\bigl(t(1-\delta) + e^{-t} - 1\bigr)\Bigr).
\]
Choose \(t = -\log(1-\delta)>0\). Then \(e^{-t}=1-\delta\) and
\begin{align*}
    t(1-\delta) + e^{-t} - 1 
    = -(1-\delta)\log(1-\delta) + (1-\delta)-1 = -(1-\delta)\log(1-\delta) - \delta.
\end{align*}
Substituting yields the claimed bound.
\end{proof}

We next introduce a simple criterion for $F_{s,t}^{r}$-free $r$-graphs. 
\begin{lemma}\label{lem:complement}
Let \(2\le s,t\le r-1\) and \(s+t\ge r\). The $r$-graph $G$ is $F^r_{s,t}$-free if and only if for every pair of disjoint sets
$S,T\subseteq V(G)$ with $|S|=s$ and $|T|=t$, there is an edge $e\in \overline{G}$ such that
\[
S\subseteq e\subseteq S\cup T \quad\text{or}\quad T\subseteq e\subseteq S\cup T.
\]
\end{lemma}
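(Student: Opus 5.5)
The criterion follows by unwinding what it means for $G$ to contain a copy of $F^r_{s,t}$; the only care needed is in how copies are placed.

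First, I fix what a copy is. A copy of $F^r_{s,t}$ in $G$ is an injective map $\phi$ from the vertex set $S_0\cup T_0$ of $F^r_{s,t}$ into $V(G)$ that sends every edge of $F^r_{s,t}$ to an edge of $G$. Injectivity lets me identify a copy with a pair of disjoint sets $S=\phi(S_0)$ and $T=\phi(T_0)$ in $V(G)$, with $|S|=s$ and $|T|=t$. Since $F^r_{s,t}$ is defined only in terms of the partition $(S_0,T_0)$, the image of its edge set is exactly
\[
\mathcal{E}(S,T)=\left\{e\in\tbinom{S\cup T}{r}: S\subseteq e\text{ or }T\subseteq e\right\}.
\]
So $G$ contains $F^r_{s,t}$ if and only if there are disjoint $S,T\subseteq V(G)$ of sizes $s$ and $t$ with $\mathcal{E}(S,T)\subseteq E(G)$.

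Second, I negate this. $G$ is $F^r_{s,t}$-free if and only if for every such pair $(S,T)$ some $e\in\mathcal{E}(S,T)$ is missing from $G$. Such an $e$ is an $r$-subset of $V(G)$ that is not in $G$, so $e\in E(\overline G)$. By the definition of $\mathcal{E}(S,T)$ it satisfies $e\subseteq S\cup T$ and either $S\subseteq e$ or $T\subseteq e$. This is exactly the stated condition. For the converse, any $e\in\overline G$ with $S\subseteq e\subseteq S\cup T$ or $T\subseteq e\subseteq S\cup T$ is an $r$-subset of $S\cup T$ containing $S$ or $T$. Hence $e\in\mathcal{E}(S,T)$ and $e\notin G$, so this pair $(S,T)$ does not give a copy. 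Together the two directions prove the lemma.

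The hypotheses $s,t\le r-1$ and $s+t\ge r$ only guarantee that $\mathcal{E}(S,T)$ is nonempty and that $F^r_{s,t}$ is well defined; the argument itself does not use them. The only point needing attention is checking that copies correspond to ordered disjoint pairs $(S,T)$. Copies that differ by an automorphism of $F^r_{s,t}$ are harmless, since the condition quantifies over all pairs. When $s=t$ the roles of $S$ and $T$ can swap, but the condition is symmetric in them, so there is no real obstacle.
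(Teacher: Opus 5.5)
Your proof is correct and follows the paper's argument: the candidate copy of $F^r_{s,t}$ on $S\cup T$ consists exactly of the $r$-subsets of $S\cup T$ that contain $S$ or $T$, so $G$ avoids it precisely when one of these $r$-sets lies in $\overline{G}$. Your extra bookkeeping about injective embeddings and ordered pairs $(S,T)$ only makes explicit what the paper leaves implicit.
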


\begin{proof}
The candidate copy of $F^r_{s,t}$ on $S\cup T$ consists exactly of the $r$-sets contained in
$S\cup T$ that contain $S$ or contain $T$. Hence $G$ contains this copy if and only if all these
candidate edges lie in $G$. Equivalently, $G$ avoids this copy if and only if at least one candidate
edge lies in the complement $\overline{G}$.
\end{proof}

Inspired by this lemma, we introduce the following ``shadow-complete'' concept.  

\begin{definition}
Let $n\ge r \ge s \ge 2$ and $R\geq 0$ be integers. An $r$-graph $G$ is 
{\it $(s,R)$-shadow-complete} if for every $(r+R)$-set $U\subseteq V(G)$ and every $s$-set
$S\subseteq U$, there is an edge $e\in G$ such that $S\subseteq e\subseteq U$.
Let
\[
\theta(n,r,s,R):=
\frac{1}{\tbinom{n}{r}}
\min\left\{|G|:G\subseteq\tbinom{[n]}{r}\text{ is }(s,R)\text{-shadow-complete}\right\},
\]
and define
\[
\theta(r,s,R):=\lim_{n\to\infty}\theta(n,r,s,R).
\]
\end{definition}
This limit exists by the usual averaging argument. Indeed, if \(m<n\), then taking
a uniformly random \(m\)-vertex induced subgraph of an optimal \(n\)-vertex
\((s,R)\)-shadow-complete \(r\)-graph shows that
\[
\theta(m,r,s,R)\le \theta(n,r,s,R).
\]
Thus \(\theta(n,r,s,R)\) is nondecreasing in \(n\), and in particular $\theta(n,r,s,R)\le \theta(r,s,R)$ for every \(n\ge r\).

We note that the above limit estimation provides an important upper bound on $q_{r,i}$ for our approach (but not an exact characterization). 

\begin{proposition}\label{prop:shadow-implies}
For $2\le i\le r-1$,
\[
q_{r,i}\le \theta(r,i,i-1).
\]
\end{proposition}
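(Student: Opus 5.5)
Given an $(i,i-1)$-shadow-complete $r$-graph $H$ on $[n]$ with $|H|=\theta(n,r,i,i-1)\binom{n}{r}$, I will take $G:=\overline{H}$ and show that $G$ is $F^r_{i,r-1}$-free. Then
\[
\mathrm{ex}(n,F^r_{i,r-1})\ge \bigl(1-\theta(n,r,i,i-1)\bigr)\binom nr\ge \bigl(1-\theta(r,i,i-1)\bigr)\binom nr,
\]
where the last step uses the monotonicity $\theta(n,r,i,i-1)\le\theta(r,i,i-1)$ noted after the definition. Letting $n\to\infty$ gives $\pi(F^r_{i,r-1})\ge 1-\theta(r,i,i-1)$, which is exactly $q_{r,i}\le\theta(r,i,i-1)$.

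To verify $F^r_{i,r-1}$-freeness I will apply Lemma~\ref{lem:complement} with $s=i$ and $t=r-1$. Its hypotheses hold: $2\le i\le r-1$ and $s+t=i+r-1\ge r$. Fix disjoint $S,T\subseteq[n]$ with $|S|=i$ and $|T|=r-1$, and put $U=S\cup T$, so $|U|=r+i-1=r+R$ with $R=i-1$. Since $S\subseteq U$ is an $i$-set, $(i,i-1)$-shadow-completeness of $H=\overline{G}$ supplies an edge $e\in\overline G$ with $S\subseteq e\subseteq S\cup T$. This is the first alternative in Lemma~\ref{lem:complement}, so $G$ contains no copy of $F^r_{i,r-1}$.

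No step here is a real obstacle. The only points to check are bookkeeping: the parameter match $|S\cup T|=r+(i-1)$, and the direction of the inequality between $\theta(n,\cdot)$ and its limit. The second alternative of Lemma~\ref{lem:complement}, with $T\subseteq e$, is not needed, which is why the bound is only an inequality.
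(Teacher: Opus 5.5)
Your proof is correct and matches the paper's argument: take an optimal $(i,i-1)$-shadow-complete $r$-graph, note that for disjoint $S,T$ with $|S|=i$, $|T|=r-1$ the set $U=S\cup T$ has size $r+i-1$, so shadow-completeness gives an edge $e$ with $S\subseteq e\subseteq U$, and conclude via Lemma~\ref{lem:complement} that the complement is $F^r_{i,r-1}$-free. Your use of the monotonicity $\theta(n,r,i,i-1)\le\theta(r,i,i-1)$ simply spells out the paper's brief ``taking asymptotic densities'' step.
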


\begin{proof}
Let $G$ be $(i,i-1)$-shadow-complete. Given any pair of disjoint subsets $S,T\subseteq V(G)$ with $|S|=i$ and $|T|=r-1$, set
$U=S\cup T$. Then $|U|=r+i-1$. By shadow-completeness, there exists $e\in G$ with
$S\subseteq e\subseteq U$. By Lemma~\ref{lem:complement}, $\overline{G}$ is $F^r_{i,r-1}$-free. Taking asymptotic densities gives the claim.
\end{proof}


\section{Proof of Theorem~\ref{thm:small-i}}\label{sec:bound-fixed-i}
In this section, we prove Theorem~\ref{thm:small-i} by providing the desired upper bound on $q_{r,a+1}$ via $(a+1,a)$-shadow-complete $r$-graphs.
Our construction follows the recursive idea used by Pikhurko~\cite{Pikhurko} in his construction of Tur\'an systems.
The goal in \cite{Pikhurko} is to construct an \(r\)-graph such that every \((r+a)\)-set $U$ contains an $r$-edge. 
Very roughly speaking, it first uses a random hypergraph on the initial part of an ordered vertex set, and then treats the uncovered sets recursively on the remaining vertices.

We use a similar two-step scheme for the desired \((a+1,a)\)-shadow-complete $r$-graphs. The key difference is that, for each \((r+a)\)-set \(U\), we must cover every \((a+1)\)-subset of \(U\), rather than merely ensuring the presence of an edge in \(U\).
This conceptually will be carried out in the first step -- Lemma~\ref{lem:random-seed} (using general parameters $s,R$ instead of $a+1, a$, respectively), where we use the probabilistic method to cover all \((a+1)\)-subsets for most \(k\)-sets, where $k$ is chosen to be an intermediate parameter satisfying $a+1\leq k < r-a$.
These uncovered sets are then handled in the second step via a recursive shadow-complete construction on the remaining vertices.

\begin{lemma}\label{lem:random-seed}
Let $R,s,k$ be integers with $R\geq 1,k\ge s$, and let $h\ge s$.  Let $c$ satisfy $ h\le c\le \tbinom{k}{R}, $
and let $\Gamma$ be the random $R$-graph on a fixed $k$-set in which each $R$-edge is
selected independently with probability $p=c/\binom{k}{R}$.  Then
\begin{equation}\label{eq:random-seed-main}
\Prb\bigl[\alpha(\Gamma)\ge k-s\bigr]
\le
c^h h^{-h}e^{h-c}
+
\binom{k}{s}\left(\frac{sR}{k-R+1}\right)^{h+1}, 
\end{equation}
where \(\alpha(\Gamma)\) denotes the independence number of \(\Gamma\). 
\end{lemma}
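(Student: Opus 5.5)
The plan is to translate the event $\alpha(\Gamma)\ge k-s$ into a covering statement and split according to the number of edges of $\Gamma$. If $I$ is an independent set of size at least $k-s$, pick any $s$-set $S$ disjoint from a $(k-s)$-subset of $I$. Then no edge of $\Gamma$ lies inside $V\setminus S$, so every edge of $\Gamma$ meets $S$. Write $X=|\Gamma|$, so that $\mathbb{E}[X]=p\binom{k}{R}=c$. The event in \eqref{eq:random-seed-main} is contained in
\[
\{X\le h\}\ \cup\ \bigcup_{S\in\binom{[k]}{s}}\{X\ge h+1 \text{ and every edge of }\Gamma\text{ meets }S\}.
\]
The two terms of \eqref{eq:random-seed-main} will bound these two pieces.

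For the first piece I will use Lemma~\ref{LEM:Chernoff}, with $X$ written as a sum of independent indicators. If $h=c$, the claimed term is $1$ and there is nothing to prove. Otherwise I take $\delta=1-h/c\in(0,1)$, so that $(1-\delta)\mu=h$, and the lemma gives
\[
\Prb[X\le h]\le \exp\bigl(-h\log(h/c)-(c-h)\bigr)=c^hh^{-h}e^{h-c}.
\]

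For the second piece, fix an $s$-set $S$ and let $\mathcal{M}$ be the family of $R$-subsets of the $k$-set that meet $S$. Put $q=|\mathcal{M}|/\binom{k}{R}$. I will bound $1-q$ from below via
\[
1-q=\frac{\binom{k-s}{R}}{\binom{k}{R}}=\prod_{j=0}^{R-1}\frac{k-s-j}{k-j}\ge\Bigl(1-\frac{s}{k-R+1}\Bigr)^R\ge 1-\frac{sR}{k-R+1},
\]
which gives $q\le sR/(k-R+1)$. Next I condition on $X=m$. Given $X=m$, the edge set is a uniformly random $m$-subset of $\binom{[k]}{R}$, so the probability that all $m$ edges lie in $\mathcal{M}$ is
\[
\binom{|\mathcal{M}|}{m}\Big/\binom{\binom{k}{R}}{m}\le q^m.
\]
Summing over $m\ge h+1$ with weights $\Prb[X=m]$ gives at most $q^{h+1}$. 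A union bound over the $\binom{k}{s}$ choices of $S$ then yields the second term.

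The only mildly delicate step is this second piece. One must bound the joint event ``many edges and all of them meet $S$'' rather than just ``all edges meet $S$''. Conditioning on $X$ and using the uniform-subset ratio $\binom{M'}{m}/\binom{M}{m}\le (M'/M)^m$ handles this cleanly. The elementary inequality for $\binom{k-s}{R}/\binom{k}{R}$ must also be arranged to produce exactly the denominator $k-R+1$. If $k-s<R$, then $1-q=0$ and $q=1$; since $sR/(k-R+1)\ge 1$ in that regime, the bound remains trivially valid.
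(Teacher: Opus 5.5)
Your proposal is correct and follows the same route as the paper: you reduce $\alpha(\Gamma)\ge k-s$ to the union over $s$-sets of the events that every edge meets the set, and you bound $\Prb[e(\Gamma)\le h]$ via Lemma~\ref{LEM:Chernoff} with $\delta=1-h/c$. You then bound each remaining event, conditionally on $e(\Gamma)=m\ge h+1$, by $\bigl(1-\binom{k-s}{R}/\binom{k}{R}\bigr)^m\le \bigl(sR/(k-R+1)\bigr)^m$ using Bernoulli's inequality, and finish with a union bound. Your version is slightly tidier in two places: you justify the conditional bound explicitly through the uniform $m$-subset ratio, and you use $q\le 1$ to pass from $q^m$ to $q^{h+1}$, which avoids the paper's extra case split on whether $sR/(k-R+1)<1$.
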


\begin{proof}
For an $s$-set $Y$, let $A_Y$ be the event that every selected edge of $\Gamma$ meets $Y$.
The event $\alpha(\Gamma)\ge k-s$ is the union of the events $A_Y$ over all $Y\in\binom{[k]}{s}$.  
We proceed to estimate the probability of $A_Y$ conditioned on the event $e(\Gamma)=j$.
If $A_Y$ occurs, then all $j$ selected $R$-sets lie in the family of $R$-sets meeting $Y$.  If $k-s<R$, then the desired bound \eqref{eq:random-seed-main} is trivial since $sR/(k-R+1)\ge 1$ (if $R\ge 2$, this is evident; if $R=1$, then $k=s$ and thus this is equal). Otherwise,
\[
\frac{\tbinom{k-s}{R}}{\tbinom{k}{R}}
=\prod_{\ell=0}^{R-1}\frac{k-s-\ell}{k-\ell}
\ge \left(\frac{k-s-R+1}{k-R+1}\right)^R,
\]
and we have
\begin{align*}
\Prb\bigl[A_Y\mid e(\Gamma)=j\bigr]
\le \left(1-\frac{\tbinom{k-s}{R}}{\tbinom{k}{R}}\right)^j 
\le \left( 1-\left( \frac{k-s-R+1}{k-R+1}\right)^R\right)^j 
\le \left(\frac{sR}{k-R+1}\right)^j,
\end{align*}
where the last inequality holds by Bernoulli inequality $x^R\geq R(x-1)+1$ for $x\geq 0$ and $R\geq 1$.
If \(c=h\), then $c^h h^{-h}e^{h-c}=1$, and the desired bound is trivial. Hence we may assume \(c>h\).
Since $e(\Gamma)$ has mean $c$ and $h< c$, applying Lemma~\ref{LEM:Chernoff} with $\mu=c$ and $\delta=1-h/c\in (0,1)$ yields 
\[
\Prb\bigl[e(\Gamma)\le h\bigr]
\le \exp\left\{-c+h\log(ec/h)\right\}
= c^h h^{-h}e^{h-c}.
\]
On the event $j\ge h+1$, take the union bound over $Y$. If \(sR/(k-R+1)\ge 1\), then the second term on the right-hand side of~\eqref{eq:random-seed-main} is at least one, so the bound is trivial. Otherwise \(sR/(k-R+1)<1\), and for \(j\ge h+1\) the preceding bound is at most its value at \(j=h+1\). This gives 
\begin{align*}
    \Prb\bigl[\alpha(\Gamma)\ge k-s\bigr] &=\Prb\bigl[\cup_{Y} A_{Y}\bigr] \le \Prb\bigl[e(\Gamma)\le h\bigr]
     +\Prb\bigl[e(\Gamma)\ge h+1\bigr]\cdot
    \Prb\bigl[\cup_{Y}A_{Y}\mid e(\Gamma)\ge h+1\bigr] \\
    & \le c^h h^{-h}e^{h-c}
    + \sum_{Y}\Prb\bigl[A_{Y}\mid e(\Gamma)\ge h+1\bigr] \le c^h h^{-h}e^{h-c}
    +\binom{k}{s}\left(\frac{sR}{k-R+1}\right)^{h+1}, 
\end{align*}
which completes the proof. 
\end{proof}

We now proceed to prove a fixed-parameter upper bound for shadow-complete systems. Define
\[
\Lambda_a(r):=\theta(r,a+1,a)\tbinom{r-1}{a}.
\]
Our goal is to prove $\Lambda_a(r)=O_a(1)$, which would imply the upper bound for $q_{r,a+1}$ by Proposition~\ref{prop:shadow-implies}. 
In the next lemma, we will first use Lemma~\ref{lem:random-seed} to obtain an intermediate bound on $\Lambda_a(r)$.

\begin{lemma}\label{lem:recursion}
Let \(a\ge1\), \(r>a+1\), and \(a+1\le k\le r-a-1\). Let $D>1$ and
$m\in\mathbb N$ satisfy
\[
Dm(a+1)\le \tbinom{k}{a}.
\]
Set 
\[
p_0=(De^{1-D})^{m(a+1)}+\binom{k}{a+1}
\left(\frac{(a+1)a}{k-a+1}\right)^{m(a+1)+1}.
\]
Then
\begin{equation}\label{eq:recursion}
\Lambda_a(r)
\le
\binom{r-1}{a}
\left(
\frac{Dm(a+1)}{\tbinom{k}{a}}
+p_0\frac{\Lambda_a(r-k)}{\tbinom{r-k-1}{a}}
\right).
\end{equation}
\end{lemma}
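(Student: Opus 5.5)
The plan is a two-step construction in the spirit of Pikhurko. For large $n$ we build an $(a+1,a)$-shadow-complete $r$-graph $G=G_1\cup G_2$ on $[n]$. The random part $G_1$ covers almost every $(r+a)$-set $U$. The recursive part $G_2$ is an optimal $(a+1,a)$-shadow-complete $(r-k)$-graph, used to handle the exceptional $U$. We then show $|G|/\binom{n}{r}$ is at most the bracket in \eqref{eq:recursion} times $\binom{r-1}{a}$, up to $o(1)$, and let $n\to\infty$.

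\emph{Parameters.} We apply Lemma~\ref{lem:random-seed} with $R=a$, $s=a+1$, $h=m(a+1)$ and $c=Dm(a+1)$. The hypotheses $h\le c\le\binom{k}{a}$ hold because $D>1$ and $Dm(a+1)\le\binom{k}{a}$. The first term of \eqref{eq:random-seed-main} is $c^hh^{-h}e^{h-c}=(De^{1-D})^{m(a+1)}$, and the second is $\binom{k}{a+1}\bigl((a+1)a/(k-a+1)\bigr)^{m(a+1)+1}$. So a random $a$-graph on a $k$-set with edge probability $p=Dm(a+1)/\binom{k}{a}$ has $\alpha\ge k-a-1$ with probability at most $p_0$. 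This is precisely what \eqref{eq:recursion} needs.

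\emph{Covering property.} For an $(r+a)$-set $U$, let $K_U$ be the $k$ smallest vertices of $U$ in a suitable ordering or partition. Attach to $K_U$ an independent random $a$-graph $\Gamma$ with edge probability $p$. Put into $G_1$ every $U\setminus A$ with $A\in\Gamma$ and $A\subseteq K_U$.

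Suppose $\alpha(\Gamma[K_U])<k-a-1$, and take any $(a+1)$-set $S\subseteq U$. The set $K_U\setminus S$ has at least $k-a-1$ vertices, so it contains an edge $A\in\Gamma$. Then $e=U\setminus A\in G_1$ satisfies $S\subseteq e\subseteq U$.

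If instead $\alpha(\Gamma[K_U])\ge k-a-1$, which happens with probability at most $p_0$, we cover $U$ by recursion. Put the last $r-k$ vertices $U'$ of $U$ into an $(a+1,a)$-shadow-complete $(r-k)$-graph $H$ on the remaining vertices. Add to $G_2$ all $r$-sets $K\cup f$ with $f\in H$ and $\Gamma[K]$ bad. Since $|U'|=(r-k)+a$ and $k\le r-a-1$ gives $r-k\ge a+1$, the recursion is well defined. This covers every $S$ with $S\subseteq U'$. Sets $S$ meeting $K_U$ need an extra argument, for instance putting all of $K_U$ into the edge and recursing with a suitably shifted parameter.

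\emph{Edge count.} $\mathbb E|G_1|$ should contribute $p\binom{r-1}{a}\binom{n}{r}=\binom{r-1}{a}\tfrac{Dm(a+1)}{\binom{k}{a}}\binom nr$. Here the factor $\binom{r-1}{a}$ arises, as in Pikhurko's normalisation, from the positions where the removed $a$-set can sit relative to $e$ when vertices are ordered. The set $G_2$ contributes $p_0\,\theta(r-k,a+1,a)\binom nr=p_0\binom{r-1}{a}\frac{\Lambda_a(r-k)}{\binom{r-k-1}{a}}\binom nr$. Fixing an outcome achieving at most the expectation then gives \eqref{eq:recursion}.

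\emph{Main obstacle.} The hardest step is making the density accounting come out with exactly the factor $\binom{r-1}{a}$ rather than a factor growing with $n$. If the random $a$-graph lives on a global ordering, an $r$-set $e$ is hit by far too many pairs $(U,A)$. I would first try Pikhurko's device: replace the vertex order by i.i.d.\ uniform labels in $[0,1]$, blow up, and choose $\Gamma$ with a label-dependent probability so that, for a uniformly random $r$-set $e$, the expected number of admissible $A$ is exactly $p\binom{r-1}{a}$. If that fails, I would split $[n]$ into a small initial block $X$ and its complement, and optimise $|X|/n$ so that $|U\cap X|=k$ for the relevant $U$. The covering of sets $S$ that straddle $K_U$ in the recursive branch is the second delicate point.
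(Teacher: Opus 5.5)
There is a genuine gap, and it is exactly the step you flag as the main obstacle: you never define $G_1$ in a way whose density you can control. Suppose you use a single global random $a$-graph $\Gamma$, or independent random $a$-graphs attached to each $k$-set. Then a fixed $r$-set $e=\{x_1<\dots<x_r\}$ arises as $U\setminus A$ from every $a$-set $A$ disjoint from $e$ with $\max A<x_{k-a+1}$. That is $\Theta(n^a)$ independent chances, so almost every $r$-set lands in $G_1$. The remedies you list (random labels, label-dependent probabilities, a block partition) are not carried out.

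The missing idea is to randomize the \emph{retained} $(k-a)$-set rather than the removed $a$-set. Take a random $(k-a)$-graph $\mathcal S$ on $[n]$ with edge probability $p=Dm(a+1)/\binom{k}{a}$, and put an $r$-set into $G_1$ iff its $k-a$ smallest vertices form an edge of $\mathcal S$. This gives three things:
\begin{itemize}
\item Membership of $e$ depends only on $e$, so $G_1$ has density exactly $p$.
\item For each $k$-set $X$, the $a$-graph $\Gamma_X=\{X\setminus Q: Q\in\mathcal S[X]\}$ is a genuine $p$-random $a$-graph (via $A\leftrightarrow X\setminus A$), so Lemma~\ref{lem:random-seed} gives $\Prb[X\text{ bad}]\le p_0$.
\item If $X$ consists of the $k$ smallest vertices of $U$ and $A\in\Gamma_X$, then $X\setminus A$ are the $k-a$ smallest vertices of $U\setminus A$, so $U\setminus A\in G_1$.
\end{itemize}
With this, your covering argument for non-bad $X$ goes through verbatim. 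What was missing is the coupling: the $a$-graphs on different $k$-sets are correlated (whether $A\in\Gamma_X$ depends only on $X\setminus A$), not independent.

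Your bookkeeping is also off by a factor $\binom{r-1}{a}$. Since $\Lambda_a(r)=\theta(r,a+1,a)\binom{r-1}{a}$, inequality \eqref{eq:recursion} is equivalent to $\theta(r,a+1,a)\le p+p_0\,\theta(r-k,a+1,a)$. So you need $|G_1|\le (p+o(1))\binom{n}{r}$, not $p\binom{r-1}{a}\binom{n}{r}$. Likewise $p_0\,\theta(r-k,a+1,a)=p_0\Lambda_a(r-k)/\binom{r-k-1}{a}$, with no extra factor $\binom{r-1}{a}$. Showing $|G|/\binom{n}{r}\le\binom{r-1}{a}[\,\cdot\,]$, as you propose, would only give $\Lambda_a(r)\le\binom{r-1}{a}^2[\,\cdot\,]$. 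The factor $\binom{r-1}{a}$ you try to manufacture from the positions of the removed $a$-set is just the normalisation in the definition of $\Lambda_a$; it does not come from the construction.

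Two smaller points. First, the straddling $(a+1)$-sets in the recursive branch need no extra argument. The edge $X\cup f$ already contains $X$, so it suffices to extend $S\setminus X$ to an $(a+1)$-subset of $U\setminus X$ (possible as $|U\setminus X|=r+a-k\ge 2a+1$) and invoke shadow-completeness of the recursive graph. Second, to get the $G_2$ term, the paper takes a separate optimal $H_X$ on $\{x_k+1,\dots,n\}$ for each bad $X$. It then uses $\sum_{y}\binom{y-1}{k-1}\binom{n-y}{r-k}=\binom{n}{r}$ together with $\theta(n-y,r-k,a+1,a)\le\theta(r-k,a+1,a)$.
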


\begin{proof}
Let $n$ be sufficiently large. 
We aim to construct an $(a+1,a)$-shadow-complete $r$-graph on $[n]$ whose edge density satisfies the claimed bound. Label the vertices $1,2,\dots,n$ in increasing order.  

Let $\mathcal S$ be a random $(k-a)$-graph on $[n]$ where each $(k-a)$-set appears independently with probability $p = Dm(a+1)/\binom{k}{a}$.  
Define the first $r$-graph $G_1$ by including an $r$-set $\{x_1<\dots<x_r\}$ if and only if its $k-a$ smallest vertices form an edge of $\mathcal S$.  
For a $k$-set $X=\{x_1<\dots<x_k\}$, we say $X$ is \emph{bad} if there exists an $(a+1)$-set $Z\subseteq X$ such that no member of $\mathcal S[X]$ contains $Z$. Equivalently, the $a$-graph $\Gamma_X:=\{X\setminus Q:Q\in\mathcal S[X]\}$ on $X$ has independence number at least $k-a-1$. By Lemma~\ref{lem:random-seed} (with $s=a+1$, $R=a$, $c=Dm(a+1)$, $h=m(a+1)$), the probability that a fixed $k$-set is bad is at most $p_0$.  

For each bad \(k\)-set \(X=\{x_1<\cdots<x_k\}\) with \(x_k\le n-r+k\), choose an
\((a+1,a)\)-shadow-complete \((r-k)\)-graph \(H_X\) on
\(\{x_k+1,\dots,n\}\) with
\[
|H_X|=\theta(n-x_k,r-k,a+1,a)\binom{n-x_k}{r-k}.
\]
Now let $G_2$ be the $r$-graph obtained by adding all $r$-sets $X\cup f$ with $f\in H_X$ for every bad $k$-set \(X\) with \(x_k\leq n-r+k\).
Finally, we set $G=G_1\cup G_2$. 
We claim that $G$ is the desired $r$-graph.

First, we verify that $G$ is $(a+1,a)$-shadow-complete. Take any $(r+a)$-set $U=\{u_1<\dots<u_{r+a}\}$ and any $(a+1)$-set $Z\subseteq U$. Let $X=\{u_1,\dots,u_k\}$ be the $k$ smallest vertices of $U$.  

If $X$ is not bad, then every $(a+1)$-subset of $X$ is contained in some edge of $\mathcal S[X]$. Choose an $(a+1)$-set $Y\subseteq X$ that contains $Z\cap X$ (possible since $|X|=k\ge a+1$). Pick $Q\in\mathcal S[X]$ with $Y\subseteq Q$, and consider $e=Q\cup(U\setminus X)$. Because $|Q|=k-a$ and $|U\setminus X|=r+a-k$, we have $|e|=r$. All elements of $Q$ are $\le x_k$ while those of $U\setminus X$ are $>x_k$, so the $k-a$ smallest vertices of $e$ are exactly the elements of $Q$. Hence $e\in G_1[U]$, and $Z\subseteq Y\cup(U\setminus X)\subseteq e$.  

If $X$ is bad, then $|U\setminus X|=r+a-k\ge 2a+1\ge a+1$ (because $k\le r-a-1$). Extend $Z\setminus X$ to an $(a+1)$-set $Y'\subseteq U\setminus X$, which is possible as $|U\setminus X|\ge a+1$. Since $U\setminus X$ has size $r+a-k = (r-k)+a$, the recursive graph $H_X$ (which is $(a+1,a)$-shadow-complete on its vertex set) provides an edge $f\in H_X$ with $Y'\subseteq f\subseteq U\setminus X$. Then $e=X\cup f$ belongs to $G_2[U]$ and contains $Z$ by construction. Thus $G$ is indeed $(a+1,a)$-shadow-complete.  

It remains to bound the edge density. The expected density of $G_1$ is exactly $p = Dm(a+1)/\binom{k}{a}$. For $G_2$, condition on $\mathcal S$ and sum over the maximum element $y=x_k$ of a $k$-set $X$: there are $\binom{y-1}{k-1}$ choices of $X$ with that maximum. Each such $X$, if bad, contributes a recursive graph on $n-y$ vertices. Hence
\[
\mathbb{E}[|G_2|] \le \sum_{y=k}^{n-r+k} \binom{y-1}{k-1}\, p_0\cdot\theta(n-y,r-k,a+1,a)\binom{n-y}{r-k}.
\]
The identity $\sum_{y=k}^{n-r+k}\binom{y-1}{k-1}\binom{n-y}{r-k}=\binom{n}{r}$ and $\theta(n-y,r-k,a+1,a) \le \theta(r-k,a+1,a)$ give
\[
\frac{\mathbb{E}[|G_2|]}{\tbinom{n}{r}} \le p_0\cdot\theta(r-k,a+1,a).
\]
Therefore, there is an instance of the random choice of $\mathcal S$ satisfying
\[
\frac{|G|}{\tbinom{n}{r}} \le \frac{Dm(a+1)}{\tbinom{k}{a}} + p_0\,\theta(r-k,a+1,a).
\]
Letting $n\to\infty$ and using $\Lambda_a(r)=\theta(r,a+1,a)\binom{r-1}{a}$ yields the desired inequality.
\end{proof}

Using this lemma, we can promptly derive the following estimate on $\Lambda_a(r)$.

\begin{proposition}\label{prop:fixed-R}
For every fixed integer $a\ge 1$, there is a constant $F_a>0$ such that 
\(\Lambda_a(r)\le F_a\) holds for all integers \(r>a\).
In particular, we have
\(\theta(r,a+1,a)\le F_a\binom{r-1}{a}^{-1}.\)
\end{proposition}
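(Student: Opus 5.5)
We prove the bound by strong induction on $r$, using Lemma~\ref{lem:recursion} as the inductive step. For the base range of small $r$ we use the trivial bound $\theta(r,a+1,a)\le 1$: the complete $r$-graph is shadow-complete. Hence $\Lambda_a(r)\le\binom{r-1}{a}$, which is a constant once $r\le r_0(a)$. Here $r_0=r_0(a)$ is a threshold to be chosen large, and we set $F_a\ge\max_{r\le r_0}\binom{r-1}{a}$.

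For $r>r_0$ we apply \eqref{eq:recursion} with $k=\lfloor r/2\rfloor$, so that $a+1\le k\le r-a-1$ and $r-k\ge r/2$. We take a fixed $D$ (say $D=e^2$, or any $D>1$ with $De^{1-D}<1$) and $m=m(a)$ a fixed large integer. The condition $Dm(a+1)\le\binom{k}{a}$ then holds for $r$ large.

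The first term of \eqref{eq:recursion} is
\[
\binom{r-1}{a}\frac{Dm(a+1)}{\binom{k}{a}}\le D m(a+1)\,(2+o(1))^a=:A_a,
\]
which is a constant. For the second term, the induction hypothesis gives $\Lambda_a(r-k)\le F_a$, and
\[
\frac{\binom{r-1}{a}}{\binom{r-k-1}{a}}\le 3^a
\]
for large $r$. So it suffices to make $3^a p_0\le 1/2$. Then $\Lambda_a(r)\le A_a+F_a/2\le F_a$ as long as $F_a\ge 2A_a$.

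Bounding $p_0$ is the main step. The first part $(De^{1-D})^{m(a+1)}$ is $\rho^{m(a+1)}$ with $\rho<1$, so it is at most $3^{-a}/4$ once $m$ is large in terms of $a$. The second part is at most
\[
k^{a+1}\left(\frac{a(a+1)}{k-a+1}\right)^{m(a+1)+1}=O_a\!\left(k^{a+1-m(a+1)-1}\right).
\]
Taking $m\ge 2$, this tends to $0$ as $k\to\infty$, so it is below $3^{-a}/4$ once $r_0$ is large. The order of choices is $D$ first, then $m$ depending on $a$ and $D$, then $r_0$ depending on everything, then $F_a=\max\bigl(2A_a,\max_{r\le r_0}\binom{r-1}{a}\bigr)$. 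This order avoids circularity, since $F_a$ enters only through the inductive hypothesis and not through the choice of $r_0$.

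The main obstacle is exactly this bookkeeping: the factor $\binom{r-1}{a}/\binom{r-k-1}{a}$ must be bounded by a constant, which is why $k$ is a constant fraction of $r$. The factor $\binom{k}{a+1}$ must then be beaten by the power of $1/k$, which is why $m\ge2$ is needed. The final statement $\theta(r,a+1,a)\le F_a\binom{r-1}{a}^{-1}$ is just the definition of $\Lambda_a$.
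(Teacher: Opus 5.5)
Your proof is correct and follows the paper's argument. You apply Lemma~\ref{lem:recursion} with $k$ a constant fraction of $r$, choose $D$ and then $m$ so that $p_0$ times the binomial ratio is at most $1/2$, and close by strong induction over a trivially bounded base range. The only difference is cosmetic: you take $k=\lfloor r/2\rfloor$, giving ratio factors of about $2^a$, whereas the paper takes $k=r-\lfloor r/(a+1)\rfloor$, giving a ratio factor of about $(a+1)^a$.
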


\begin{proof}
Fix $a\geq 1$. Choose $D>1$ and $m$ large enough so that
\[
(2(a+1))^a(De^{1-D})^{m(a+1)}\le \frac14.
\]
Increase $m$ if necessary so that $m(a+1)>a+1$. For all sufficiently large $r$, take
\(k=r-\left\lfloor\frac{r}{a+1}\right\rfloor.\)
Then $r-k\ge a+1$ and the hypotheses of Lemma~\ref{lem:recursion} hold. Moreover,
\[
\frac{\tbinom{r-1}{a}}{\tbinom{k}{a}}=O_a(1),
\quad
\frac{\tbinom{r-1}{a}}{\tbinom{r-k-1}{a}}=O_a((a+1)^a),
\]
and the second term in $p_0$ tends to $0$ as $r\to\infty$, because $a$ and $m$ are fixed and
$m(a+1)+1>a+1$. Therefore, after increasing the threshold for $r$, Lemma~\ref{lem:recursion} implies
\[
\Lambda_a(r)\le A_a+\frac12\Lambda_a\left(\left\lfloor\frac{r}{a+1}\right\rfloor\right)
\]
for a constant $A_a$ depending only on $a$.
Choose \(r_0\) large enough so that \(\lfloor r/(a+1)\rfloor>a\) for all \(r\ge r_0\) and the preceding inequality is
valid whenever \(r\ge r_0\).
Since there are
only finitely many $a<r<r_0$, put
\[
B_a:=\max_{a<r<r_0}\Lambda_a(r),
\]
with the convention that $B_a=0$ if the set is empty. 
By the preceding inequality, an easy inductive argument shows that
\[
\Lambda_a(r)\le 2A_a+B_a
\]
for all $r\ge r_0$ (and also for all $r>a$), and hence the proposition follows.
\end{proof}

\begin{proof}[\bf Proof of Theorem~\ref{thm:small-i}]
The result follows by combining Proposition~\ref{prop:shadow-implies} with
Proposition~\ref{prop:fixed-R}.
\end{proof}

\section{Proof of Theorem~\ref{thm:near-endpoint}}\label{sec:near-endpoint}

We now prove the lower bound for Theorem~\ref{thm:near-endpoint}. 
We need the following lemma, which can be derived from the clique-counting theorem of de Caen in the form presented in~\cite{BBBZ}.

\begin{lemma}[\cite{deCaen,BBBZ}]\label{lem:decaen}
Fix integers $b\ge 2$ and $t\ge b$. Let $K$ be a $b$-uniform hypergraph on $N$ vertices with missing-edge density $\alpha$, and set
\[
\widehat\alpha = 1 - (1-\alpha)\frac{N-b+1}{N} = \alpha + (1-\alpha)\frac{b-1}{N}.
\]
If $1-\binom{j-1}{b-1}\widehat{\alpha}\ge 0$ for every $b\le j\le t$, then the density of $t$-vertex cliques in $K$ satisfies
\[
\frac{m_t}{\tbinom{N}{t}} \ge \prod_{j=b}^{t} \left(1 - \binom{j-1}{b-1}\widehat\alpha\right).
\]
For fixed $b$, sufficiently large $t$, and $\alpha\binom{t-1}{b-1}=o_{t}(1)$, letting $N\to \infty$, then the right-hand side is asymptotically
\(\exp\left(-(1+o_{t}(1))\,\alpha\binom{t}{b}\right),\)
uniformly over such $\alpha$.
\end{lemma}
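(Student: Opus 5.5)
The plan is to prove the product bound one factor at a time, through the ratios of consecutive clique densities. Write $\rho_j:=m_j/\binom{N}{j}$, with $\rho_{b-1}=1$ since every $(b-1)$-set is vacuously a clique. It then suffices to show, for every $b\le j\le t$,
\[
\frac{\rho_j}{\rho_{j-1}}\ \ge\ 1-\binom{j-1}{b-1}\widehat\alpha ,
\]
and multiply these over $j$. The nonnegativity hypothesis $1-\binom{j-1}{b-1}\widehat\alpha\ge 0$ is exactly what allows the inequalities to be multiplied. For $j=b$ the inequality holds because $\rho_b=1-\alpha\ge 1-\widehat\alpha$.

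\textbf{Probabilistic reformulation.} I take a uniformly random ordering $v_1,\dots,v_N$ of $V(K)$. Then $\rho_j/\rho_{j-1}$ is the conditional probability that $\{v_1,\dots,v_j\}$ is a clique, given that $J=\{v_1,\dots,v_{j-1}\}$ is a clique. Given $J$, the vertex $v_j$ is uniform on $V\setminus J$. The set $J\cup\{v_j\}$ fails to be a clique only if some $(b-1)$-subset $S\subseteq J$ has $S\cup\{v_j\}\notin K$. By the union bound, the failure probability is at most
\[
\frac{1}{N-j+1}\sum_{S\in\binom{J}{b-1}}\bar d(S),
\]
where $\bar d(S)$ is the number of missing edges through $S$. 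These missing edges automatically avoid $J\setminus S$, since $J$ is a clique.

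\textbf{Main obstacle: correlation.} The difficulty is that the $(b-1)$-subsets of a clique are not uniformly distributed. Cliques may prefer sets $S$ of large missing degree, so I cannot simply replace $\bar d(S)$ by its global average $\alpha(N-b+1)$. I would follow de Caen's argument as presented in \cite{BBBZ}. The step is to double count pairs $(J',x)$, where $J'$ is a $(j-1)$-clique and $x\notin J'$ completes a missing edge with some $S\subseteq J'$. I would bound this count via the inclusion $J\setminus S$ lies in the common non-missing neighbourhood, combined with a convexity (Cauchy--Schwarz) step over the $(b-1)$-sets. The target is to show that the clique-weighted average of $\bar d(S)/(N-j+1)$ is at most $\widehat\alpha$. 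The correction $(1-\alpha)(b-1)/N$ in $\widehat\alpha$ is exactly the slack from counting vertices inside $S$ and from the normalization $N$ versus $N-b+1$. This is the step I expect to require the most care. If the direct averaging does not close, I would first try an induction on $j$ in which $\rho_{j-1}$ is itself used to control the weighted missing-degree sum.

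\textbf{Asymptotic form.} As $N\to\infty$ we have $\widehat\alpha\to\alpha$. Writing $x_j=\binom{j-1}{b-1}\widehat\alpha$, all $x_j\le\binom{t-1}{b-1}\widehat\alpha=o_t(1)$, so $\log(1-x_j)=-x_j(1+O(x_j))$ uniformly. Summing and using $\sum_{j=b}^t\binom{j-1}{b-1}=\binom{t}{b}$ gives
\[
\log\prod_{j=b}^{t}(1-x_j)=-(1+o_t(1))\,\alpha\binom{t}{b}.
\]
The error term is at most $\max_j x_j\cdot\sum_j x_j=o_t(1)\cdot\alpha\binom{t}{b}$, and this holds uniformly over all $\alpha$ with $\alpha\binom{t-1}{b-1}=o_t(1)$.
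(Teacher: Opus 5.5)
There is a genuine gap, in the per-step inequality. Your reduction to the bounds $\rho_j/\rho_{j-1}\ge 1-\binom{j-1}{b-1}\widehat\alpha$ is sound, and these bounds do hold. Your asymptotic part is the same $\log(1-x)=-x+O(x^2)$ expansion with a sum-of-squares error that the paper uses.

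The per-step bound is the whole content of the clique-counting inequality. The paper does not reprove it; it applies Corollary~10 of \cite{BBBZ} (de Caen's bound) with parameter $\widehat\alpha$. Had you simply cited that, you would match the paper. The argument you outline, however, does not close:

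\begin{itemize}
\item After the union bound you need the first-moment claim that the $(j-1)$-clique-weighted average of $\bar d(S)$ is at most $(N-j+1)\widehat\alpha$. This is stronger than what is needed, because the union bound throws away overlaps.
\item For $j=b+1$ the weight a clique puts on $S$ is $d(S)=N-b+1-\bar d(S)$, a function of $\bar d(S)$, so a Cauchy--Schwarz step over $(b-1)$-sets can work.
\item For $j\ge b+2$ the weight is the number of $(j-1)$-cliques through $S$, which is not a function of $\bar d(S)$. A convexity step over $(b-1)$-sets no longer applies, and you give no other argument.
\end{itemize}

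The missing idea is de Caen's recursion between consecutive ratios, which is essentially your fallback. Let $J$ be a $j$-clique and $v\notin J$ a vertex not extending it. Some $(b-1)$-set $S\subseteq J$ has $S\cup\{v\}\notin K$. Hence $v$ extends at most $b-1$ of the $(j-1)$-subsets of $J$, namely those omitting a vertex of $S$. Write $d(L)$ for the number of vertices extending a $(j-1)$-clique $L$. Double counting, together with Cauchy--Schwarz and $\sum_L d(L)=jm_j$, gives
\[
\frac{j^2m_j^2}{m_{j-1}}-jm_j\;\le\;\sum_L d(L)\bigl(d(L)-1\bigr)\;\le\;(j+1)(j-b+1)\,m_{j+1}+(b-1)(N-j)\,m_j .
\]
Now normalise by $N$ rather than $N-j+1$, setting $z_j:=\frac{j\,m_j}{N\,m_{j-1}}$.

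\begin{itemize}
\item The display becomes $1-z_{j+1}\le\frac{j}{j-b+1}(1-z_j)$, after dropping the nonnegative term $\frac{(b-2)j}{N(j-b+1)}$ (this uses $b\ge2$).
\item The base case is $z_b=1-\widehat\alpha$ exactly. This normalisation is where $\widehat\alpha$ comes from, not slack from vertices inside $S$.
\item Induction gives $z_j\ge 1-\binom{j-1}{b-1}\widehat\alpha$.
\item Finally $\rho_j/\rho_{j-1}=\frac{N}{N-j+1}\,z_j\ge z_j$, which is your per-step bound.
\end{itemize}

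If some factor is $0$, the product bound is trivial. Otherwise all $m_i>0$, so the divisions are legitimate.
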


\begin{proof}
Let $m_j$ denote the number of $j$-vertex cliques in $K$. We have
\[
|E(K)| = (1-\alpha)\tbinom{N}{b} = (1-\widehat\alpha)\tbinom{N}{b}\frac{N}{N-b+1}.
\]
A direct application of Corollary~10 of Balogh, Bohman, Bollob\'as, and Zhao~\cite{BBBZ} with the parameter $\widehat\alpha$ would give the desired inequality
\[
\frac{m_t}{\tbinom{N}{t}} \ge \prod_{j=b}^{t} \left(1 - \binom{j-1}{b-1}\widehat\alpha\right).
\]

For the asymptotic statement, we first send $N\to\infty$ while keeping $b$ and $t$ fixed.  Then $\widehat\alpha \to \alpha$.  Under the assumption $\alpha\binom{t-1}{b-1}=o_{t}(1)$, all factors in the product are positive for sufficiently large $t$.  Moreover,
\[
\sum_{j=b}^{t} \binom{j-1}{b-1}\alpha = \alpha\tbinom{t}{b},
\]
and the sum of squares is
\[
\sum_{j=b}^{t} \left(\binom{j-1}{b-1}\alpha\right)^2 = O\left(\alpha\binom{t-1}{b-1}\right)\,\alpha\binom{t}{b} = o_{t}\left(\alpha\binom{t}{b}\right).
\]
Taking logarithms of the product and expanding $\log(1-x) = -x + O(x^2)$ yields
\[
\log\left(\prod_{j=b}^{t} \left(1 - \binom{j-1}{b-1}\alpha\right)\right) = -(1+o_{t}(1))\,\alpha\binom{t}{b},
\]
which exponentiates to the claimed asymptotic bound.  The uniformity over $\alpha$ follows because the error terms depend only on $t$ and $\alpha\binom{t-1}{b-1}=o_{t}(1)$.
\end{proof}

We are ready to prove Theorem~\ref{thm:near-endpoint}.

\begin{proof}[\bf Proof of Theorem~\ref{thm:near-endpoint}]
Let $G$ be an $n$-vertex $r$-graph with edge density $q=q(G)$ such that $\overline{G}$ is $F^r_{r-b,r-1}$-free. Throughout this proof, all \(o(1)\) terms are taken as \(n\to\infty\) first and then \(r\to\infty\), with \(b\) fixed. Fix an $(r-b)$-set $S\subseteq V(G)$ and consider its $b$-uniform link $L_G(S)$. Let $\alpha_S$ denote its edge density.  Averaging over all $S$ gives $\mathbb{E}_S[\alpha_S] = q$.

A set $T\in\binom{V(G)\setminus S}{r-1}$ is called an {\bf $S$-clique} if $\binom{T}{b}\cap L_G(S)=\emptyset$; equivalently, none of the $r$-edges $S\cup A$ with $A\in\binom{T}{b}$ belongs to $G$ (so they all lie in $\overline{G}$).  
If $T$ is an $S$-clique, then to avoid a copy of $F^r_{r-b,r-1}$ in $\overline{G}$ on the vertex set $S\cup T$, at least one of the $r-b$ edges
\(T\cup\left\{s\right\}\) for $s\in S$ must be present in $G$. Let \(\kappa(S)\) denote the proportion of \((r-1)\)-sets \(T\subseteq V(G)\setminus S\) that are \(S\)-cliques.

We first derive a lower bound on $\mathbb{E}[\kappa(S)]$.  If $qC$ is already larger than a sufficiently large constant multiple of $\log(C/(r-b))$, then the desired bound is immediate. Hence we may assume
\begin{equation}\label{eq:qC-assume}
qC = O\left(\log\frac{C}{r-b}\right) = O_b(\log r).
\end{equation}
Set $D := \binom{r-2}{b-1}$.  Then
\(qD = \frac{b}{r-1}\,qC = o(1).\)
Choose a parameter $\eta = \eta(r)$ such that
\[
\eta\to 0,\quad \frac{qD}{\eta}=o(1),\quad \mbox{and} \quad \frac{qD}{\eta}\,qC = o(1).
\]
For instance, under~\eqref{eq:qC-assume} one may take $\eta = (\log^2 r / r)^{1/3}$.
Call an $(r-b)$-set $S$ \emph{good} if $\alpha_S D \le \eta$.  By Markov's inequality and $\mathbb{E}_S[\alpha_S]=q$, the proportion $\mu$ of bad $S$ satisfies
\[
\mu \le \frac{qD}{\eta} = o(1),\quad\text{and consequently}\quad \mu qC = o(1).
\]
For every good $S$, we apply Lemma~\ref{lem:decaen} to the link $L_{\overline{G}}(S)$ (note that $\alpha_S$ is the edge density of $L_G(S)$, hence the missing-edge density of $L_{\overline{G}}(S)$ is also $\alpha_S$).  Since $\alpha_S D\le\eta=o(1)$, the asymptotic part of Lemma~\ref{lem:decaen} implies that $\kappa(S)$ is asymptotically at least
\[
\exp\bigl(-(1+o(1))\,\alpha_S C\bigr),
\]
uniformly over good $S$.
Applying Jensen's inequality over the good sets \(S\) gives
\[
\mathbb E[\kappa(S)]
\ge
(1-\mu)\exp\left(-(1+o(1))C\,
\mathbb E[\alpha_S\mid S\text{ good}]\right).
\]
Since $(1-\mu)\mathbb E[\alpha_S\mid S\text{ good}]\le \mathbb E[\alpha_S]=q$, and \(\mu qC=o(1)\), we obtain
\begin{equation}\label{eq:relative-clique-lower}
\mathbb{E}\left[\kappa(S)\right] \ge \exp\bigl(-(1+o(1))\,qC\bigr).
\end{equation}

We now obtain an upper bound on $\mathbb{E}\left[\kappa(S)\right]$ by counting ``witnesses''.  An edge $e\in G$ can witness a pair $(S,T)$ only if $e = T\cup\{s\}$ for some $s\in S$.  For a given $e$, there are at most $r$ choices for $s\in e$; then $T = e\setminus\{s\}$ is determined, and the remaining $r-b-1$ vertices of $S$ must lie outside $e$.  Hence the total number of ordered pairs $(S,T)$ witnessed by edges of $G$ is at most $r|G| \cdot \binom{n-r}{r-b-1}$.

The total number of ordered pairs \((S,T)\) with
\(|S|=r-b\), \(|T|=r-1\), and \(S\cap T=\emptyset\) is
\(\binom{n}{r-b}\binom{n-r+b}{r-1}\). Thus the witness density is at most
\[
\frac{r|G|\tbinom{n-r}{r-b-1}}
{\tbinom{n}{r-b}\tbinom{n-r+b}{r-1}}
=
(r-b)q+o(1).
\]
Every $S$-clique must be witnessed (otherwise $\overline{G}$ would contain a copy of $F^r_{r-b,r-1}$), so inequality~\eqref{eq:relative-clique-lower} implies
\[
\exp\bigl(-(1+o(1))\,qC\bigr) \le (r-b)q.
\]
Set $y = qC$.  Then the inequality becomes $y e^{(1+o(1))y} \ge C/(r-b)$.  Solving for $y$ in terms of the Lambert $W$-function gives
\[
q(G)\cdot C=qC=y \ge (1-o(1))\,W\!\left(\frac{C}{r-b}\right).
\]
Since \(G\) was arbitrary, taking the infimum over all such deletion graphs and then letting \(n\to\infty\) gives
\[
q_{r,r-b}\ge (1-o(1))\,\frac{W\!\left(C/(r-b)\right)}{C}.
\]
For fixed $b\ge2$, the asymptotic expansion $W(C/(r-b)) = (b-1)\log r + O_b(\log\log r)$ yields the simplified form
\[
q_{r,r-b} \ge (1-o(1))\,\frac{(b-1)\log r}{\tbinom{r-1}{b}}.
\]
The proof is complete. 
\end{proof}

\section{Concluding remarks}

In this note, we have sharpened the asymptotic estimates for the deficits $q_{r,i}$ of the hypergraphs $B_i^{(r)}$ near both ends of the considered sequence of hypergraphs.
More precisely, for every fixed $a\ge 1$, we have proved that $q_{r,a+1}=\Theta_a(r^{-a})$, closing the logarithmic gap left by the work of Balogh et al.~\cite{BBBZ}.  
At the opposite end, for every fixed codimension $b\ge 2$, we established $q_{r,r-b}=\Theta_b(r^{-b}\log r)$, matching the upper bound of Balogh et al.~\cite{BBBZ} through a new logarithmic lower bound obtained by clique-counting.

The only remaining case where the logarithmic factor is not yet resolved is the endpoint $i=r-1$ (equivalently $b=1$).  
It would be interesting to determine the precise asymptotic behavior of $q_{r,r-1}$. 

Another direction for future work is to investigate the behavior of $q_{r,i}$ when $i$ grows with $r$ in other regimes, for instance when $i = \Theta(r)$.  Our results indicate a sharp change in behavior: for fixed $i$ there is no logarithmic factor, while for $i = r-b$, with fixed $b\ge2$, a logarithmic factor appears. A natural problem is to determine the precise threshold where this transition occurs.  

\section*{Acknowledgments}
H.L. was supported by the National Natural Science Foundation of China (12501487), by the China Scholarship Council, and by the Institute for Basic Science (IBS-R029-C4). 
J.M. was supported by the National Key Research and Development Program of China (2023YFA1010201) and the National Natural Science Foundation of China grant 12125106.
T.W. and T.Z. were supported by the Innovation Program for Quantum Science and Technology (2021ZD0302902).

\section*{Declaration on the use of AI}
The authors used AI-assisted tools solely in the proof exploration and presentation of
Theorem~\ref{thm:near-endpoint}. 
All other mathematical ideas and proofs were developed independently by the authors without the use of AI-assisted tools. In particular, the proof of Theorem~\ref{thm:small-i} was inspired by the work of \cite{Pikhurko}, and the underlying approach dates back to November 2024. 
Each author has carefully checked all mathematical content of the paper and accepts full responsibility for its correctness.


\bibliographystyle{abbrv}

\end{document}